\documentclass[11pt]{amsart}

\usepackage[margin=1in]{geometry}
\usepackage{amsmath,amssymb,amsthm,mathtools}
\usepackage{booktabs}
\usepackage{caption}
\usepackage{delimset}
\usepackage{enumitem}
\usepackage{float}
\usepackage{microtype}
\usepackage{tikz}
\usepackage[numbers,sort&compress,longnamesfirst]{natbib}
\usetikzlibrary{arrows.meta,calc,fit,positioning}
\usepackage{xcolor}
\definecolor{BIT}{cmyk}{1, 0, 1, 0}
\usepackage[colorlinks,linkcolor=BIT,citecolor=BIT,urlcolor=BIT]{hyperref}
\usepackage[capitalise,nameinlink]{cleveref}
\crefformat{section}{#2\S#1#3}
\Crefformat{section}{#2\S#1#3}
\crefformat{subsection}{#2\S#1#3}
\Crefformat{subsection}{#2\S#1#3}
\crefname{equation}{Eq.}{Eqs.}
\Crefname{equation}{Eq.}{Eqs.}
\crefname{figure}{Fig.}{Figs.}
\Crefname{figure}{Fig.}{Figs.}
\numberwithin{equation}{section}

\makeatletter
\let\originalNATtest\NAT@test
\renewcommand\NAT@test[1]{%
  \ifNAT@numbers
    \ifnum#1=\@ne
      \ifnum\NAT@ctype=\z@
        \NAT@hyper@{\originalNATtest{#1}}%
      \else
        \originalNATtest{#1}%
      \fi
    \else
      \originalNATtest{#1}%
    \fi
  \else
    \originalNATtest{#1}%
  \fi
}
\makeatother
\hypersetup{
  pdftitle={Infinite Families of Counterexamples to the Stanley--Gasharov Conjecture},
  pdfauthor={David G. L. Wang, K. Zhang, and T.Y. Zhao},
  pdfkeywords={Stanley--Gasharov conjecture, chromatic symmetric function, claw-free graph, Schur positivity, line graph}
}

\newtheorem{lemma}{Lemma}[section]
\newtheorem{theorem}[lemma]{Theorem}
\newtheorem{proposition}[lemma]{Proposition}
\newtheorem{conjecture}[lemma]{Conjecture}
\crefname{conjecture}{Conjecture}{Conjectures}
\Crefname{conjecture}{Conjecture}{Conjectures}
\theoremstyle{remark}
\newtheorem{remark}[lemma]{Remark}

\newcommand{\CSF}{X}
\newcommand{\Gtwo}{G_{2}}
\newcommand{\Gone}{G_{1}}
\newcommand{\Ft}{F_t}
\newcommand{\Qt}{Q_t}
\newcommand{\Bt}{B_t(G,\,v)}
\newcommand{\coeff}[2]{\brk[s]1{s_{#1}}\,#2}

\newcommand{\graphpointsize}{1.45pt}
\tikzset{
  graph edge/.style={line width=.35pt},
  graph label/.style={font=\scriptsize,inner sep=1pt},
  bridge/.style={line width=.55pt}
}
\newcommand{\GraphDot}[1]{\fill (#1) circle (\graphpointsize);}

\title[Two infinite counterexample families to the Stanley--Gasharov conjecture]
{Two infinite families of counterexamples\\
to the Stanley--Gasharov conjecture}
\author[D. G. L. Wang]{David G. L. Wang}
\address{School of Mathematics and Statistics \& MIIT Key Laboratory of Mathematical Theory and Computation in Information Security, Beijing Institute of Technology, Beijing 102400, P. R. China}
\email{glw@bit.edu.cn}

\author[K. Zhang]{K. Zhang}
\address{School of Mathematics and Statistics, Beijing Institute of Technology, Beijing 102400, P. R. China}
\email{kai@bit.edu.cn}

\author[T.Y. Zhao]{T.Y. Zhao$^*$}
\thanks{$^*$Corresponding author.}
\address{College of Science, China University of Petroleum (Beijing), Beijing 102249, P. R. China}
\email{zhaotongyuan@cup.edu.cn}
\thanks{Wang is supported by the National Natural Science Foundation of China
(Grant No.~12171034).}

\subjclass[2020]{Primary 05E05; Secondary 05C15, 05C76}
\keywords{Chromatic symmetric function, claw-free graph, line graph,
quasi-line graph, Schur positivity, Stanley--Gasharov conjecture}
\date{}

\begin{document}
\begin{abstract}
The Stanley--Gasharov conjecture asserts that every claw-free graph is Schur-positive. Prajapati and, independently, Matherne and Morales identified the same pair of counterexamples, both of which are line graphs, thereby disproving the conjecture. In this paper, we construct two infinite families of counterexamples to the Stanley--Gasharov conjecture, thereby answering a question of Matherne and Morales. Every graph in the first family is a line graph, whereas no graph in the second family is a line graph.

Prajapati further showed that the graph $G_2$, which has $12$ vertices and $21$ edges, is the smallest counterexample under the ordering that first compares the numbers of vertices and then the numbers of edges. We show that $G_2$ is also the smallest counterexample under the reverse ordering, which first compares the edge numbers and then the vertex numbers. Similarly, we exhibit a graph $Q$ with $13$ vertices and $27$ edges and show that $Q$ is the smallest counterexample that is not a line graph under each ordering. Our two infinite families are obtained from $G_2$ and $Q$, respectively, by adjoining a clique of order at least $4$ and connecting one of its vertices to a distinguished vertex of the original graph by a single edge.
\end{abstract}
\maketitle

\section{Introduction}\label{sec:introduction}

Throughout this paper, all graphs are finite and simple.
For a graph $G$, 
\citet{Stanley1995} introduced the \emph{chromatic symmetric
function} 
\[
\CSF_G\coloneqq
\sum_{\text{ proper }\kappa\colon V(G)\to\mathbb Z_{>0}}
\prod_{u\in V(G)}x_{\kappa(u)},
\]
as a symmetric-function refinement of Birkhoff's chromatic polynomial. 
It is now a central object in algebraic combinatorics. A symmetric function is \emph{$b$-positive} if all coefficients
in its $b$-expansion are nonnegative; here $b$ is any basis of the algebra of symmetric functions. A graph is \emph{$b$-positive} if
its chromatic symmetric function is $b$-positive.  The Schur-positivity is
representation-theoretically natural because Schur functions encode
irreducible polynomial representations; see
\citet[pp.~162--163]{Macdonald1995} and \citet{Stanley1981,Stanley1999}. \citet{Stanley1998} posed 
the following conjecture and attributed it
to Gasharov; see also \citet{Gasharov1999}.
\begin{conjecture}[\citeauthor{Stanley1998}--Gasharov, 1998]\label{conj:SG}
Every claw-free graph is Schur-positive.
\end{conjecture}
A principal source of evidence for \cref{conj:SG} was the theorem of
\citet{Gasharov1996} that the incomparability graph of every $(3+1)$-free
poset is Schur-positive; these graphs form a prominent claw-free subclass.
The stronger $e$-positivity assertion for this subclass, known as
the \emph{Stanley--Stembridge conjecture}
~\cite{StanleyStembridge1993} is recently proved by \citet{Hikita2025}.

Several algebraic approaches to the positivity problems of graphs have been developed.
\citet{GebhardSagan2001} introduced a noncommutative approach, while
\citet{ShareshianWachs2016} introduced chromatic quasisymmetric functions.
To attack \cref{conj:SG}, \citet{WangWang2020} gave a
combinatorial formula for every Schur coefficient of~$\CSF_G$ and used it
to classified the Schur-positive
complete bipartite graphs and complete tripartite graphs.  \citet{Pawlowski2022}
subsequently developed a group-algebra approach.
Beyond incomparability graphs, \citet{ShelburneVanWilligenburg2025} proved
the Schur positivity of a claw-free family called generalized nets. They \citep{ShelburneVanWilligenburg2026} subsequently characterized
Schur-positive complete multipartite graphs.

Days ago, \cref{conj:SG} was disproved independently by
\citet{Prajapati2026} and by \citet{MM2026}, both presenting the same pair of line graph counterexamples. Line graphs are all claw-free.
The complete census in
\citet[Proposition~4]{Prajapati2026} shows that the number~$12$ is the minimum
order of a counterexample graph and that exactly two counterexample graphs occur at that order.  
In the notation of \citet{Prajapati2026}, they are~$G$
and~$G'$,
with $22$ and $21$ edges,
respectively.
In the notation of
\citet{MM2026}, these graphs are
$\Gone=L(H_1)$ and $\Gtwo=L(H_2)$, 
respectively.
As a result, under the lexicographic order
\[
\brk1{\lvert V(G)\rvert,\,\lvert E(G)\rvert},
\]
the graph~$\Gtwo$ is the unique
\emph{vertex-first, edge-second minimum} counterexample.

\Citet[Remark~2.3]{MM2026} wrote
``It would be interesting to find ... infinite families of counterexamples.'' The census of \citet{Prajapati2026} determines the
vertex-first, edge-second minimum, whereas the complementary
edge-first, vertex-second minimum remained to be determined.  The
existence of infinite families also remains open.  

This paper is threefold.
\begin{enumerate}[label=\textup{(\arabic*)},leftmargin=*,itemsep=3pt]
\item 
In \cref{sec:edge-first-minimality}, we report an exhaustive exact
computation for the previously untreated range
\[
13\le n\le21,\qquad n-1\le m\le20, 
\]
where $n$ is the number of vertices and $m$ is the number of edges.
Our computation checks all $144{,}492$ connected claw-free graphs
and finds no counterexample.  Together with the census of
\citet{Prajapati2026}, this shows that the line graph~$\Gtwo$ is the
unique minimum under the complementary edge-first, vertex-second order.
\item In \cref{sec:line-family}, we extend the graph~$\Gtwo$ to graphs~$\Ft$, for
$t\ge4$, which form an infinite family of counterexamples. More precisely,
\[
\coeff{(4,\,4,\,4,\,4,\,1^{t-4})}{\CSF_{\Ft}}
=-40(t-1)(t-1)!<0.
\]

\item In \cref{sec:nonline-family}, we construct a counterexample graph~$Q$ that is not a line graph. It has $13$ vertices and~$27$ edges and satisfies
\[
\coeff{(3,\,3,\,3,\,3,\,1)}{\CSF_Q}=-144.
\]
Combined exhaustive exact
computations show that~$Q$ is the smallest graph under both the vertex-first, edge-second order and the edge-first, vertex-second order.  For~$t\ge5$, we construct the infinite family of counterexample graphs~$\Qt$ that are not 
line graphs, with
\[
\coeff{(4,\,4,\,4,\,4,\,2,\,1^{t-5})}{\CSF_{\Qt}}
=-144(t-1)(t-1)!<0.
\]
\end{enumerate}

To place the third contribution in the structural theory of claw-free graphs, recall that a graph is \emph{quasi-line} if the neighborhood of every vertex is the union of two cliques.  Line graphs form a proper subclass of quasi-line graphs, which in turn form a proper subclass of claw-free graphs. The structure theory of \citet{ChudnovskySeymour2005} describes connected claw-free graphs in terms of basic classes and expansion operations.

\section{The base counterexample and its edge-first minimality}
\label{sec:edge-first-minimality}
The $21$-edge counterexamples in
\citet[Proposition~4]{Prajapati2026} and \citet{MM2026} are isomorphic.
Following the notation of \citet{MM2026}, let~$H_2$ be the graph on
vertex set
$\{1,\,\dots,\,10\}$ with edge set
$E(H_2)=\{v_1,\,\dots,\,v_{12}\}$, where
\[
\begin{array}{r@{\,=\,}l r@{\,=\,}l r@{\,=\,}l r@{\,=\,}l
                r@{\,=\,}l r@{\,=\,}l}
v_1 & \{1,\,8\}, & v_2 & \{1,\,9\}, & v_3 & \{2,\,7\}, &
v_4 & \{3,\,6\}, & v_5 & \{3,\,10\}, & v_6 & \{4,\,5\},\\
v_7 & \{4,\,9\}, & v_8 & \{5,\,9\}, & v_9 & \{6,\,10\}, &
v_{10} & \{7,\,9\}, & v_{11} & \{7,\,10\}, & v_{12} & \{8,\,10\}.
\end{array}
\]
Let~$\Gtwo=L(H_2)$.  For $1\le i\le12$, the vertex~$v_i$ of~$\Gtwo$
corresponds to the edge~$v_i$ of~$H_2$ under the line-graph construction.
By the
forbidden-induced-subgraph characterization of \citet{Beineke1970}, every
line graph is claw-free; hence~$\Gtwo$ is claw-free.
See \cref{fig:G2}.
\begin{figure}[ht]
\centering
\begin{minipage}[b]{0.45\textwidth}
\centering
\begin{tikzpicture}[scale=0.82,line cap=round,line join=round]
  \coordinate (h1) at (-0.2,-1.4);
  \coordinate (h2) at ( 1.7, 1.6);
  \coordinate (h3) at (-1.5, 0.5);
  \coordinate (h4) at ( 0.8,-1.6);
  \coordinate (h5) at ( 1.8,-0.5);
  \coordinate (h6) at (-0.5, 1.6);
  \coordinate (h7) at ( 0.8, 0.5);
  \coordinate (h8) at (-1.1,-0.5);
  \coordinate (h9) at ( 0.8,-0.5);
  \coordinate (h10) at (-0.5, 0.5);

  \draw[graph edge] (h1)--(h8) (h1)--(h9)
        (h2)--(h7)
        (h3)--(h6) (h3)--(h10)
        (h4)--(h5) (h4)--(h9) (h5)--(h9)
        (h6)--(h10) (h7)--(h9) (h7)--(h10) (h8)--(h10);

  \foreach \p in {h1,h2,h3,h4,h5,h6,h7,h8,h9,h10} {\GraphDot{\p}}
  \node[graph label,below left=2.5pt] at (h1) {$1$};
  \node[graph label,above right=2.5pt] at (h2) {$2$};
  \node[graph label,left=2.5pt] at (h3) {$3$};
  \node[graph label,below=2.5pt] at (h4) {$4$};
  \node[graph label,right=2.5pt] at (h5) {$5$};
  \node[graph label,above=2.5pt] at (h6) {$6$};
  \node[graph label,above left=2.5pt] at (h7) {$7$};
  \node[graph label,above left=2.5pt] at (h8) {$8$};
  \node[graph label,above left=2.5pt] at (h9) {$9$};
  \node[graph label,above right=2.5pt] at (h10) {$10$};
\end{tikzpicture}
\captionsetup{font=footnotesize,justification=centering}
\captionof{figure}{The root graph~$H_2$.}
\label{fig:H2}
\end{minipage}\hfill
\begin{minipage}[b]{0.51\textwidth}
\centering
\begin{tikzpicture}[scale=0.90,line cap=round,line join=round]
  \coordinate (v1)  at (-0.65,-1.50);
  \coordinate (v2)  at (-1.35,-0.60);
  \coordinate (v3)  at (-0.65, 1.50);
  \coordinate (v4)  at ( 2.30, 0.00);
  \coordinate (v5)  at ( 1.30, 0.60);
  \coordinate (v6)  at (-3.50, 0.00);
  \coordinate (v7)  at (-2.50, 0.60);
  \coordinate (v8)  at (-2.50,-0.60);
  \coordinate (v9)  at ( 1.30,-0.60);
  \coordinate (v10) at (-1.35, 0.60);
  \coordinate (v11) at ( 0.15, 0.60);
  \coordinate (v12) at ( 0.15,-0.60);

  \draw[graph edge] (v1)--(v2) (v1)--(v12)
        (v2)--(v7) (v2)--(v8) (v2)--(v10)
        (v3)--(v10) (v3)--(v11) (v10)--(v11)
        (v4)--(v5) (v4)--(v9)
        (v5)--(v9) (v5)--(v11) (v5)--(v12)
        (v6)--(v7) (v6)--(v8)
        (v7)--(v8) (v7)--(v10)
        (v8)--(v10)
        (v9)--(v11) (v9)--(v12)
        (v11)--(v12);

  \foreach \p in {v1,v2,v3,v4,v5,v6,v7,v8,v9,v10,v11,v12} {\GraphDot{\p}}
  \node[graph label,below=2.5pt] at (v1) {$v_1$};
  \node[graph label,below left=2.5pt] at (v2) {$v_2$};
  \node[graph label,above=2.5pt] at (v3) {$v_3$};
  \node[graph label,right=2.5pt] at (v4) {$v_4$};
  \node[graph label,above=2.5pt] at (v5) {$v_5$};
  \node[graph label,left=2.5pt] at (v6) {$v_6$};
  \node[graph label,above=2.5pt] at (v7) {$v_7$};
  \node[graph label,below=2.5pt] at (v8) {$v_8$};
  \node[graph label,below=2.5pt] at (v9) {$v_9$};
  \node[graph label,above left=2.5pt] at (v10) {$v_{10}$};
  \node[graph label,above right=2.5pt] at (v11) {$v_{11}$};
  \node[graph label,below right=2.5pt] at (v12) {$v_{12}$};
\end{tikzpicture}
\captionsetup{font=footnotesize,justification=centering}
\captionof{figure}{The line graph~$\Gtwo=L(H_2)$.}
\label{fig:G2}
\end{minipage}
\end{figure}
The graph~$\Gtwo$ is isomorphic to the graph~$G'$ in
\citet[Proposition~4]{Prajapati2026} and to the $21$-edge
counterexample in \citet{MM2026}.  In particular,
\begin{equation}\label{eq:base-negative}
\coeff{(3,\,3,\,3,\,3)}{\CSF_{\Gtwo}}=-40.
\end{equation}
Thus the graph~$\Gtwo$ is a counterexample to \cref{conj:SG}.

\begin{proposition}[Edge-first minimality]\label{prop:edge-first-minimality}
Every claw-free graph~$H$ with fewer than $21$ edges, or with $21$ edges and
fewer than $12$ vertices, is Schur-positive.
The graph~$\Gtwo$ is the unique claw-free graph with
$21$ edges and $12$ vertices that is not Schur-positive.
Consequently, the graph $\Gtwo$ is the unique minimum counterexample to \cref{conj:SG} under the order
that first compares the number of edges and then the number of vertices.
\end{proposition}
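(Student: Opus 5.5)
The proof reduces to a finite computation, organized so that it splits into the region already settled by the census of \citet[Proposition~4]{Prajapati2026} and a new range handled by exhaustive enumeration. Two standard facts make the reduction work. First, $\CSF_{G\sqcup G'}=\CSF_G\CSF_{G'}$, and the product of Schur-positive functions is Schur-positive by the Littlewood--Richardson rule. Second, isolated vertices contribute factors $p_1=s_1$. Hence a minimal counterexample under either order may be taken connected. A connected graph with $m$ edges has $n\le m+1$ vertices, so every candidate $H$ with $m\le 21$ satisfies $n\le 22$ and $m\ge n-1$.

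I would partition the candidates by $n$ as follows.
\begin{enumerate}[label=\textup{(\roman*)},leftmargin=*]
\item For $n\le 12$, Prajapati's census gives the conclusion directly. Every claw-free graph on at most $11$ vertices is Schur-positive, and on $12$ vertices the only counterexamples are $\Gone$ with $22$ edges and $\Gtwo$ with $21$ edges. So no counterexample with $n\le12$ has $m<21$, and the only one with $m=21$ is $\Gtwo$.
\item For $13\le n\le 21$ with $n-1\le m\le 20$, the candidates are exactly the range announced in the introduction. I would enumerate all connected claw-free graphs there, for instance by generating connected graphs with nauty's \texttt{geng} (using its edge-count bounds) and filtering out those with an induced $K_{1,3}$, or by generating claw-free graphs directly via canonical augmentation. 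For each graph I would compute $\CSF_H$ exactly and verify that every Schur coefficient is nonnegative.
\item For $n\ge 13$ and $m=21$, the conclusion also follows, because a graph with $21$ edges and $n\ge13$ vertices is never smaller than $\Gtwo$ in the edge-first order. Indeed, "$21$ edges and fewer than $12$ vertices" is already covered by (i), and graphs with more vertices come after $\Gtwo$. The case $n=22$, $m=21$ consists of trees; claw-free trees are paths, which are known to be $e$-positive and hence Schur-positive. Strictly, for $m\le20$ we only need $n\le21$, so this case is not needed.
\end{enumerate}

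For the coefficient computation in (ii), I would use Stanley's expansion $\CSF_H=\sum_{S\subseteq E}(-1)^{|S|}p_{\lambda(S)}$, evaluated through the deletion--contraction-like recursion over connected partitions (equivalently, the stable-set partition expansion in the monomial basis $\CSF_H=\sum_\lambda a_\lambda \tilde m_\lambda$). I would then convert to the Schur basis with exact integer arithmetic via the inverse Kostka matrix. A cheaper sufficient route is to use the combinatorial formula of \citet{WangWang2020} for each Schur coefficient. Any graph showing a negative coefficient in floating point would be rechecked exactly.

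The main obstacle is computational scale: the enumeration has about $1.4\times10^5$ graphs on up to $21$ vertices, and an exact Schur expansion must be computed for each. Generating claw-free graphs efficiently, rather than filtering all connected graphs with up to $20$ edges, is the bottleneck. To reduce the work, I would first compute the monomial expansion via counts of stable partitions, which is fast for sparse graphs, and apply Kostka inversion only once per partition shape. I would also discard in advance graphs that are incomparability graphs of $(3+1)$-free posets, since these are Schur-positive by \citet{Gasharov1996}. Combining (i)--(iii) then yields all three assertions of the proposition.
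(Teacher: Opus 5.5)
Your proposal follows the paper's proof essentially exactly: the component reduction via the Littlewood--Richardson rule, Prajapati's census for $n\le 12$ and the target stratum $(m,n)=(21,12)$, and an exhaustive exact Schur computation over $13\le n\le 21$, $n-1\le m\le 20$. The paper generates this range with \texttt{geng -cF}, computes $\sum_{A}(-1)^{|A|}p_{\lambda_A}$ by a frontier dynamic program, and converts to Schur functions with Murnaghan--Nakayama characters.

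Your plan differs from the paper only in implementation details, and two of them need correcting. First, for such sparse graphs the power-sum route over at most $2^{20}$ edge subsets is much cheaper than enumerating stable partitions; a $21$-vertex path alone has $B_{20}\approx 5\times 10^{13}$ of them, so your claim that this is fast for sparse graphs is backwards. Second, a floating-point screen that rechecks only negative outputs is unsound, because it can miss a coefficient that is truly negative. You should therefore compute every coefficient in exact integer arithmetic, as you propose elsewhere.
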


We prove \cref{prop:edge-first-minimality} by the census described below.

\subsection{Finite search range}\label{subsec:finite-search}

It suffices to enumerate connected graphs.  Indeed, the chromatic symmetric
function is multiplicative over connected components, and a product of
Schur-positive symmetric functions is Schur-positive by the
\emph{Littlewood--Richardson rule}. Thus, if a graph is not Schur-positive, then one of its connected
components is not Schur-positive.

If a connected graph has $n$ vertices and $m$ edges, then~$n\le m+1$.
The complete census of \citet{Prajapati2026}, together with its
accompanying data, supplies every predecessor with~$n\le12$ and the
target stratum~$(m,n)=(21,12)$. A predecessor with~$n\ge13$ must
satisfy~$m\le20$, and connectedness then gives~$n\le m+1\le21$.
Consequently, the previously untreated connected range is exactly
\begin{equation}\label{eq:new-census-range}
13\le n\le21,\qquad n-1\le m\le20.
\end{equation}
For each order~$n$ in \eqref{eq:new-census-range}, we used version~2.9.0 of
the \texttt{geng} program of \citet{McKayPiperno2014}, with option
\texttt{-cF} and edge range~$n-1{:}20$, to generate one representative of
every connected claw-free isomorphism class.

\subsection{Exact Schur-coefficient calculation}
\label{subsec:exact-calculation}

For each graph~$G$ in \cref{eq:new-census-range}, we first computed its
power-sum expansion
using the spanning-subgraph formula of \citet{Stanley1995}:
\begin{equation}\label{eq:power-sum-census}
\CSF_G=\sum_{A\subseteq E(G)}(-1)^{\lvert A\rvert}p_{\lambda_A},
\end{equation}
where $\lambda_A$ is the partition of $\lvert V(G)\rvert$ formed by the
component sizes of the spanning subgraph $(V(G),\,A)$.  We then converted
to the Schur basis using the \emph{Frobenius character formula}; see
\citet[p.~114]{Macdonald1995}:
\begin{equation}\label{eq:frobenius-census}
p_\mu=\sum_{\lambda\vdash \lvert V(G)\rvert}
\chi^\lambda(\mu)s_\lambda.
\end{equation}
Here~$\chi^\lambda(\mu)$ is the value on cycle type~$\mu$ of the
irreducible character of~$S_{\lvert V(G)\rvert}$ indexed by~$\lambda$.
Thus, if $b_\mu$ denotes the coefficient of $p_\mu$ in
\cref{eq:power-sum-census}, then every Schur coefficient is computed as
\[
\coeff{\lambda}{\CSF_G}
=\sum_{\mu\vdash \lvert V(G)\rvert}b_\mu\chi^\lambda(\mu).
\]

The coefficients~$b_\mu$ were computed by a frontier-connectivity dynamic
program.  Fix an ordering of the edges.  Between two consecutive edge steps,
the \emph{frontier} consists of the vertices incident with both a processed
and an unprocessed edge.  After each prefix, a state groups the choices~$A$
of included edges in that prefix that induce the same data.  For any such
choice, each block of the frontier partition is the intersection of the
frontier with one component of the graph on the introduced vertices with
edge set~$A$.  The integer attached to the block is the total size of that
partial component, including its forgotten vertices.  The sizes of
components with no active vertex form the partition of \emph{closed
components}.  The state weight is the sum of~$(-1)^{\lvert A\rvert}$ over
all choices~$A$ represented by the state.

We maintain this invariant using the following operational convention.  A
vertex is introduced as a singleton partial component immediately before
its first incident edge is processed.  It remains \emph{active} while it has
an unprocessed incident edge and is forgotten immediately after its final
incident edge is processed.  The endpoints of the edge currently being
processed are therefore temporarily active even if they do not belong to
the frontier immediately before or after that step.  In particular, a
degree-one vertex is introduced immediately before its unique edge and
forgotten immediately afterward.  When a vertex is forgotten, it is removed
from its active block without changing the stored component size.  If it is
the final active vertex of its partial component, that size is appended to
the partition of closed components.  A closed component cannot later
reconnect: once its final active vertex has been forgotten, no unprocessed
edge is incident with any vertex of that component.
\begingroup
\small
\begin{verbatim}
for n = 13 to 21:
    generate each connected claw-free G with n - 1 <= m <= 20
    for each G:
        D = {empty active state: 1}
        for each edge uv, in order:
            introduce with stored size 1 each endpoint whose first edge is uv
            for each (state, weight) in D:
                add weight to the state excluding uv
                add -weight to the state including uv
                    and merging the blocks of u and v,
                    adding their sizes when the blocks are distinct
            forget each endpoint whose final edge is uv
                if its partial component has no active vertex:
                    append its stored size to the closed partition
            canonicalize states and combine equal states
        b[mu] = sum of the terminal weights with component partition mu
        c[lambda] = sum_mu b[mu] chi^lambda(mu)
                    for every partition lambda of n
        if c[lambda] < 0 for some lambda:
            record G and all negative coefficients
\end{verbatim}
\endgroup

Every spanning subgraph follows a unique sequence of exclusion and inclusion
transitions, and its terminal weight is~$(-1)^{\lvert A\rvert}$.  Combining
equal canonical states is valid because such states have identical possible
continuations.  At termination every vertex has been forgotten, so the
closed-component partition is precisely~$\lambda_A$.  The terminal weights
therefore give the coefficients in \cref{eq:power-sum-census} exactly.  We
generated the character values
$\chi^\lambda(\mu)$ by the \emph{Murnaghan--Nakayama rule}; see
\citet[p.~117]{Macdonald1995}, and then applied
\cref{eq:frobenius-census}.  All computations used exact integer arithmetic.

\subsection{Census results}\label{subsec:census-results}

The combined totals are shown in \cref{tab:edge-first-census}.  Our
computation found no negative Schur coefficient in the previously untreated
range~\cref{eq:new-census-range}.

\begin{table}[ht]
\centering
\small
\begin{tabular}{rcrrc}
\toprule
$n$ & permitted $m$ & connected claw-free graphs & counterexamples & source\\
\midrule
1  & $0$       & $1$      & $0$ & Prajapati\\
2  & $1$       & $1$      & $0$ & Prajapati\\
3  & $2$--$3$    & $2$      & $0$ & Prajapati\\
4  & $3$--$6$    & $5$      & $0$ & Prajapati\\
5  & $4$--$10$   & $14$     & $0$ & Prajapati\\
6  & $5$--$15$   & $50$     & $0$ & Prajapati\\
7  & $6$--$21$   & $191$    & $0$ & Prajapati\\
8  & $7$--$21$   & $813$    & $0$ & Prajapati\\
9  & $8$--$21$   & $2{,}271$  & $0$ & Prajapati\\
10 & $9$--$21$  & $5{,}965$  & $0$ & Prajapati\\
11 & $10$--$21$  & $14{,}314$ & $0$ & Prajapati\\
12 & $11$--$20$  & $20{,}765$ & $0$ & Prajapati\\
13 & $12$--$20$  & $31{,}693$ & $0$ & this work\\
14 & $13$--$20$  & $38{,}229$ & $0$ & this work\\
15 & $14$--$20$  & $35{,}278$ & $0$ & this work\\
16 & $15$--$20$  & $23{,}972$ & $0$ & this work\\
17 & $16$--$20$  & $11{,}339$ & $0$ & this work\\
18 & $17$--$20$  & $3{,}400$  & $0$ & this work\\
19 & $18$--$20$  & $545$    & $0$ & this work\\
20 & $19$--$20$  & $35$     & $0$ & this work\\
21 & $20$      & $1$      & $0$ & this work\\
\midrule
\multicolumn{2}{r}{Combined predecessor total}
& $188{,}884$ & $0$ & combined\\
\midrule
12 & $21$ (target stratum) & $8{,}907$ & $1$ & Prajapati\\
\bottomrule
\end{tabular}
\caption{Numbers and sources of connected claw-free isomorphism classes in
the combined edge-first census.}
\label{tab:edge-first-census}
\end{table}

The rows with~$n\le12$ and the target stratum were extracted from the
census files accompanying \citet{Prajapati2026}; the article itself
reports the complete totals by order and the two counterexamples of
order~$12$. The nine rows with~$13\le n\le21$ contain exactly the
$144{,}492$ graphs checked in this work.  None has a negative Schur
coefficient.  In the target stratum~$n=12$ and~$m=21$, the accompanying
census data identify~$\Gtwo$ as the unique counterexample. 

\begin{proof}[Proof of \Cref{prop:edge-first-minimality}]
The complete census of \citet{Prajapati2026}, together with its
accompanying data, covers all connected predecessors with~$n\le12$ and
identifies~$\Gtwo$ as the unique counterexample in the target stratum.  Our
exact computation covers every remaining connected predecessor by
\cref{eq:new-census-range} and finds no counterexample among its $144{,}492$
classes.  These two computations cover every connected predecessor.
Disconnected graphs are covered by the component reduction in
\cref{subsec:finite-search}.
\end{proof}

\section{An infinite family of line-graph counterexamples}
\label{sec:line-family}

In this section and the next, we construct two infinite families of
counterexamples by adjoining a complete graph through a single bridge.
We begin with the line-graph family and prove a Schur-coefficient transfer
lemma that will also be used in \cref{sec:nonline-family}.  The
construction is motivated by conjoined graphs studied by \citet{QTW26}.

For every integer~$t\ge 4$, take a copy of~$K_t$ disjoint from~$\Gtwo$ and
choose a distinguished vertex~$w\in V(K_t)$.  Define the graph~$\Ft$ by
adding the single edge~$v_3w$:
\begin{equation}\label{eq:family-definition}
\Ft\coloneqq\brk1{\Gtwo\sqcup K_t}+v_3w.
\end{equation}
The edge~$v_3w$ is the \emph{bridge} joining~$\Gtwo$ and~$K_t$.  In particular,
\[
\lvert V(\Ft)\rvert=12+t
\quad\text{and}\quad
\lvert E(\Ft)\rvert=22+\binom{t}{2}.
\]
See \cref{fig:general-construction}.
\begin{figure}[H]
\centering
\begin{tikzpicture}[scale=0.90,line cap=round,line join=round]
  \coordinate (gv1)  at (-2.30,-1.50);
  \coordinate (gv2)  at (-3.00,-0.60);
  \coordinate (gv3)  at (-2.30, 1.50);
  \coordinate (gv4)  at ( 0.65, 0.00);
  \coordinate (gv5)  at (-0.35, 0.60);
  \coordinate (gv6)  at (-5.15, 0.00);
  \coordinate (gv7)  at (-4.15, 0.60);
  \coordinate (gv8)  at (-4.15,-0.60);
  \coordinate (gv9)  at (-0.35,-0.60);
  \coordinate (gv10) at (-3.00, 0.60);
  \coordinate (gv11) at (-1.50, 0.60);
  \coordinate (gv12) at (-1.50,-0.60);

  \draw[graph edge] (gv1)--(gv2) (gv1)--(gv12)
        (gv2)--(gv7) (gv2)--(gv8) (gv2)--(gv10)
        (gv3)--(gv10) (gv3)--(gv11) (gv10)--(gv11)
        (gv4)--(gv5) (gv4)--(gv9)
        (gv5)--(gv9) (gv5)--(gv11) (gv5)--(gv12)
        (gv6)--(gv7) (gv6)--(gv8)
        (gv7)--(gv8) (gv7)--(gv10)
        (gv8)--(gv10)
        (gv9)--(gv11) (gv9)--(gv12)
        (gv11)--(gv12);

  \coordinate (w) at (1.80,1.50);
  \coordinate (c2) at (2.95,2.55);
  \coordinate (c3) at (4.05,2.25);
  \coordinate (c4) at (4.30,1.05);
  \coordinate (c5) at (3.30,0.35);
  \draw[graph edge] (w)--(c2) (w)--(c3) (w)--(c4) (w)--(c5)
        (c2)--(c3) (c2)--(c4) (c2)--(c5)
        (c3)--(c4) (c3)--(c5) (c4)--(c5);

  \draw[bridge] (gv3)--(w);
  \foreach \p in {gv1,gv2,gv3,gv4,gv5,gv6,gv7,gv8,gv9,gv10,gv11,gv12,w,c2,c3,c4,c5} {\GraphDot{\p}}
  \node[graph label,above=2pt] at (gv3) {$v_3$};
  \node[graph label,above=2pt] at (w) {$w$};
  \node[graph label] at (-2.30,0.00) {$\Gtwo$};
  \node[graph label] at (3.35,-0.42) {$K_5$};
\end{tikzpicture}
\caption{The construction of~$\Ft$, illustrated for~$t=5$: the full copy
of~$\Gtwo$ is joined to a copy of~$K_5$ by the single edge~$v_3w$.}
\label{fig:general-construction}
\end{figure}

\begin{proposition}\label{prop:line-graph}
For every integer~$t\ge 4$, the graph~$\Ft$ is a connected line graph.
Consequently, $\Ft$ is claw-free.
\end{proposition}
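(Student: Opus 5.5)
To prove \cref{prop:line-graph} I would exhibit an explicit root graph~$R_t$ with $L(R_t)\cong\Ft$, built from~$H_2$. Connectedness and claw-freeness then follow quickly. Connectedness holds because $\Gtwo$ and $K_t$ are both connected, and the bridge $v_3w$ joins them. Claw-freeness follows, as for~$\Gtwo$, from the theorem of \citet{Beineke1970}: every line graph is claw-free.

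\emph{Construction of the root.} In $H_2$ the vertex $v_3$ is the edge $\{2,7\}$. Vertex~$2$ has degree~$1$ in $H_2$, since the only edge of $H_2$ containing $2$ is $v_3$. I would form $R_t$ from $H_2$ by adding new vertices $a_1,\dots,a_{t-1}$ and new edges $\{2,a_1\}$ and $\{a_1,a_j\}$ for $2\le j\le t-1$. Thus vertex $a_1$ becomes the centre of a star with $t-1$ leaves, one of which is~$2$. The new edges are $e_w=\{2,a_1\}$ and $e_j=\{a_1,a_j\}$ for $j=2,\dots,t-1$, so there are $t$ new edges in total. They pairwise share the vertex~$a_1$, so they form a clique $K_t$ in the line graph.

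\emph{Verification of the adjacencies.}
\begin{enumerate}[label=\textup{(\roman*)},leftmargin=*]
\item The edges of $H_2$ are untouched, so $L(R_t)$ restricted to them is exactly $\Gtwo$.
\item The edges $e_j$ with $j\ge2$ meet no edge of $H_2$, because $a_1$ and $a_j$ are new vertices.
\item The edge $e_w$ meets an edge of $H_2$ only at vertex~$2$. The only such edge is $v_3=\{2,7\}$.
\end{enumerate}
Hence the edges between the $K_t$ part and the $\Gtwo$ part of $L(R_t)$ consist of the single edge $v_3e_w$. Identifying $e_w$ with $w$ gives $L(R_t)\cong\Ft$. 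The vertex and edge counts also agree: $L(R_t)$ has $12+t$ vertices, and its edge count is $21+\binom t2+1$.

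\emph{Main obstacle.} The only real point is ensuring that no unintended adjacency arises. This amounts to checking that vertex~$2$ has degree exactly~$1$ in~$H_2$, which I would read off from the list of $v_1,\dots,v_{12}$. If vertex $2$ had larger degree, I would instead attach a new pendant vertex $b$ to $7$ via an edge that already serves as $v_3$. This fallback is not needed here.
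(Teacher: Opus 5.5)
Your approach is the paper's: the paper's root graph $F_t'$ is obtained from $H_2$ by attaching, at the degree-one vertex $2$, a star with centre $c$ and leaves $2,u_2,\dots,u_t$, and then uses $L(F_t')\cong F_t$. However, your construction as written has an off-by-one error. With new vertices $a_1,\dots,a_{t-1}$, the new edges are $\{2,a_1\}$ together with $\{a_1,a_j\}$ for $2\le j\le t-1$. That is $1+(t-2)=t-1$ edges, not $t$, which matches your own description of $a_1$ as the centre of a star with $t-1$ leaves. So the new edges induce only $K_{t-1}$ in the line graph, and $L(R_t)\cong F_{t-1}$, which has $11+t$ vertices. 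Your claim that $L(R_t)$ has $12+t$ vertices and $21+\binom{t}{2}+1$ edges is therefore false for $R_t$ as you defined it. Actually carrying out that count would have exposed the error.

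The repair is immediate. Add $t$ new vertices $a_1,\dots,a_t$ and the edges $\{2,a_1\}$ and $\{a_1,a_j\}$ for $2\le j\le t$, so that $a_1$ has $t$ leaves. Your checks (i)--(iii) then go through unchanged, since they rely only on vertex $2$ having degree one in $H_2$, which is true because $v_3=\{2,7\}$ is the only edge containing $2$. The corrected $R_t$ is exactly the paper's $F_t'$ under $c=a_1$ and $u_i=a_i$. Your connectedness argument via the bridge is fine. Your fallback remark does not describe a coherent construction, but since it is never used it does not affect the proof.
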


\begin{proof}
Let~$\Ft'$ be the graph with
\[
V(\Ft')=V(H_2)\sqcup\{c,\,u_2,\,\dots,\,u_t\}
\quad\text{and}\quad
E(\Ft')=E(H_2)\cup\{\{2,\,c\}\}
\cup\bigl\{\{c,\,u_i\}\colon 2\le i\le t\bigr\}.
\]
It is routine to check that~$\Ft\cong L(\Ft')$.  Since~$H_2$ is connected,
so is~$\Ft'$.  Thus~$\Ft$ is a connected line graph and hence is claw-free.
\end{proof}

To transfer the negative Schur coefficient from~$\Gtwo$ to~$\Ft$, we prove
a general statement that does not require the graph~$G$ to be claw-free.
Let~$G$ be any graph, let~$v\in V(G)$, take a disjoint copy of~$K_t$ with a
distinguished vertex~$w$, and define
\[
\Bt\coloneqq\brk1{G\sqcup K_t}+vw.
\]
Let~$\Lambda$ be the ring of symmetric functions over~$\mathbb Z$, and let
\[
\rho_t\colon\Lambda\longrightarrow
\mathbb Z[x_1,\,\dots,\,x_t]^{\mathfrak S_t}
\]
be the specialization obtained by setting $x_{t+1}=x_{t+2}=\dots=0$.
Equivalently, $\rho_t\brk1{\CSF_G}$ is the generating function for proper
colorings of~$G$ using only the colors~$1,\,\dots,\,t$.

\begin{lemma}\label{lem:specialization}
For every graph~$G$, every vertex~$v\in V(G)$, and every positive
integer~$t$,
\begin{equation}\label{eq:specialization-identity}
\rho_t\brk1{\CSF_{\Bt}}
=(t-1)(t-1)!\,e_t(x_1,\,\dots,\,x_t)\,
\rho_t\brk1{\CSF_G}.
\end{equation}
\end{lemma}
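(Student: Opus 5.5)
The identity can be checked directly on colorings, since $\rho_t(\CSF_{\Bt})$ is the generating function of proper colorings of $\Bt$ that use only the colors $1,\dots,t$. Every such coloring restricts to a proper $t$-coloring $\kappa$ of $G$ and a proper $t$-coloring $\sigma$ of $K_t$. The only constraint linking the two pieces is the bridge, which requires $\sigma(w)\ne\kappa(v)$. Conversely, any such pair satisfying this condition is a proper $t$-coloring of $\Bt$. So the plan is to fix $\kappa$, count the admissible $\sigma$, and sum over $\kappa$.

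A proper $t$-coloring of $K_t$ with only $t$ colors is a bijection $V(K_t)\to\{1,\dots,t\}$. Its monomial is therefore always $x_1x_2\cdots x_t=e_t(x_1,\dots,x_t)$, no matter which bijection is used. Fix $\kappa$ and let $c=\kappa(v)$. The admissible bijections are those with $\sigma(w)\ne c$. There are $t-1$ choices for $\sigma(w)$, and then $(t-1)!$ ways to color the other $t-1$ vertices. This gives $(t-1)(t-1)!$ admissible $\sigma$, a number independent of both $c$ and $\kappa$.

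Summing over $\kappa$ then gives
\[
\rho_t\brk1{\CSF_{\Bt}}=\sum_{\kappa}\prod_{u\in V(G)}x_{\kappa(u)}\cdot (t-1)(t-1)!\,x_1\cdots x_t
=(t-1)(t-1)!\,e_t(x_1,\dots,x_t)\,\rho_t\brk1{\CSF_G},
\]
where $\kappa$ ranges over proper colorings $V(G)\to\{1,\dots,t\}$. This is \eqref{eq:specialization-identity}.

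It remains to justify that $\rho_t(\CSF_H)$ really is the restricted coloring generating function for any graph $H$. Setting $x_j=0$ for $j>t$ kills exactly those monomials that involve a color greater than $t$, and leaves the others unchanged. The argument has no real obstacle. The key observation is that the clique absorbs all $t$ colors, which makes its contribution uniform. The only case needing care is $t=1$: the count $(t-1)(t-1)!$ is then $0$, and indeed both sides vanish, because $v$ and $w$ would both be forced to take color $1$ and no proper coloring exists.
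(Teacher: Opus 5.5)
Your proposal is correct and follows essentially the same argument as the paper. Both fix a proper $t$-coloring of $G$, note that $K_t$ must be colored bijectively, count the $(t-1)(t-1)!$ extensions with $w$ avoiding $\kappa(v)$ (each contributing the factor $e_t(x_1,\dots,x_t)$), and sum; your remarks on why $\rho_t$ gives the restricted coloring generating function and on the case $t=1$ are harmless additions.
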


\begin{proof}
Fix a proper coloring~$\kappa\colon V(G)\to\{1,\,\dots,\,t\}$.  In every
extension~$\widehat\kappa$ of~$\kappa$ to~$\Bt$, the vertices of~$K_t$ must
receive each of the~$t$ colors exactly once.
Since~$vw$ is an edge, the color of~$w$ must differ from~$\kappa(v)$.
There are therefore exactly~$t-1$ choices for~$\widehat\kappa(w)$; this is
the factor~$t-1$ in \cref{eq:specialization-identity}.  After this color is
chosen, the remaining~$t-1$ colors can be assigned bijectively to the
remaining~$t-1$ vertices of~$K_t$ in~$(t-1)!$ ways.

Every such extension contributes the monomial
\[
\prod_{u\in V(G)}x_{\kappa(u)}\prod_{i=1}^t x_i
=e_t(x_1,\,\dots,\,x_t)\prod_{u\in V(G)}x_{\kappa(u)}.
\]
Consequently, each proper $t$-coloring of~$G$ contributes
$(t-1)(t-1)!$ extensions, all with the common additional factor
$e_t(x_1,\,\dots,\,x_t)$.  Summing over all proper $t$-colorings
of~$G$ gives \cref{eq:specialization-identity}.
\end{proof}

We next define the partition indexing the transferred coefficient.  For a
partition~$\lambda$, let~$\ell\brk1{\lambda}$ denote its number of parts.
Write $\lambda=(\lambda_1,\,\dots,\,\lambda_\ell)$, where
$\ell=\ell\brk1{\lambda}\le t$.  Pad
$\lambda$ with zeros until it has exactly~$t$ entries, and define
\begin{equation}\label{eq:lambda-plus-column}
\lambda+(1^t)
\coloneqq(\lambda_1+1,\,\dots,\,\lambda_t+1).
\end{equation}
In Young-diagram language, this operation adds one full column of height~$t$.

\begin{lemma}[Schur-coefficient transfer]\label{lem:transfer}
Let~$\lambda\vdash \lvert V(G)\rvert$ and suppose that
$\ell\brk1{\lambda}\le t$.  Then
\[
\coeff{\lambda+(1^t)}{\CSF_{\Bt}}
=(t-1)(t-1)!\,
\coeff{\lambda}{\CSF_G}.
\]
\end{lemma}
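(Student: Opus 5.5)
The plan is to apply the specialization~$\rho_t$ to both sides of the Schur expansion of~$\CSF_{\Bt}$ and compare coefficients, using \cref{lem:specialization} together with two standard facts about Schur polynomials in $t$ variables.

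The first fact is that $\rho_t(s_\mu)=s_\mu(x_1,\dots,x_t)$ vanishes exactly when $\ell(\mu)>t$. Moreover, the nonzero images $\{s_\mu(x_1,\dots,x_t)\colon \ell(\mu)\le t\}$ are linearly independent over~$\mathbb Z$; they form a basis of $\mathbb Z[x_1,\dots,x_t]^{\mathfrak S_t}$. The second fact is the determinantal identity
\[
e_t(x_1,\dots,x_t)\,s_\lambda(x_1,\dots,x_t)=s_{\lambda+(1^t)}(x_1,\dots,x_t)
\qquad\text{for }\ell(\lambda)\le t.
\]
This follows from the bialternant formula $s_\lambda=a_{\lambda+\delta}/a_\delta$: multiplying $a_{\lambda+\delta}$ by $x_1\cdots x_t$ multiplies every row of the alternant, which raises every exponent by one. (Alternatively, the Pieri rule for $e_t$ in $t$ variables gives the same identity.)

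Next, write $\CSF_G=\sum_{\lambda\vdash|V(G)|}c_\lambda s_\lambda$ and $\CSF_{\Bt}=\sum_{\nu}d_\nu s_\nu$. Applying~$\rho_t$ and using \cref{lem:specialization}, we obtain
\[
\sum_{\ell(\nu)\le t}d_\nu\, s_\nu(x_1,\dots,x_t)
=(t-1)(t-1)!\sum_{\ell(\lambda)\le t}c_\lambda\, s_{\lambda+(1^t)}(x_1,\dots,x_t).
\]
The map $\lambda\mapsto\lambda+(1^t)$ is injective on partitions with at most $t$ parts, so the right side is already an expansion in the Schur basis of $t$-variable symmetric polynomials. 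By linear independence, comparing the coefficients of $s_{\lambda+(1^t)}$ on both sides gives $d_{\lambda+(1^t)}=(t-1)(t-1)!\,c_\lambda$. The index $\lambda+(1^t)$ indeed has length $t\le t$, so it survives the specialization, and it is a partition of $|V(G)|+t=|V(\Bt)|$, so it is a legitimate index for $\CSF_{\Bt}$.

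The only genuinely delicate point is making sure that no information is lost by specializing to $t$ variables. This is exactly where the hypothesis $\ell(\lambda)\le t$ enters: it guarantees that the coefficient we want is indexed by a Schur polynomial that survives $\rho_t$, and it lets us use linear independence of the surviving Schur polynomials. Everything else is formal bookkeeping.
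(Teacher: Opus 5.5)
Your proof is correct and follows the paper's argument. You specialize via \cref{lem:specialization}, use the identity $e_t(x_1,\dots,x_t)\,s_\mu(x_1,\dots,x_t)=s_{\mu+(1^t)}(x_1,\dots,x_t)$ for $\ell(\mu)\le t$, and then compare coefficients using injectivity of $\mu\mapsto\mu+(1^t)$ and linear independence of the Schur polynomials that survive the specialization. The only difference is cosmetic: you justify the identity with the bialternant formula, whereas the paper uses the tableau definition (in a semistandard tableau of shape $\mu+(1^t)$ with entries at most $t$, the first column is forced to be $1,\dots,t$).
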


\begin{proof}
By \cref{lem:specialization}, we work in~$t$ variables.  Write
\[
\CSF_G=\sum_{\mu\vdash \lvert V(G)\rvert}a_\mu s_\mu.
\]
Only the terms with~$\ell\brk1{\mu}\le t$ remain after specialization.  Since
$e_t(x_1,\,\dots,\,x_t)=x_1\dotsm x_t$, the tableau definition of Schur
polynomials gives
\[
\begin{aligned}
e_t(x_1,\,\dots,\,x_t)s_\mu(x_1,\,\dots,\,x_t)
&=s_{\mu+(1^t)}(x_1,\,\dots,\,x_t)
\qquad\text{if }\ell\brk1{\mu}\le t.
\end{aligned}
\]
The map $\mu\mapsto\mu+(1^t)$ is injective, so the coefficient in question
is~$a_\lambda$.  \cref{lem:specialization} now gives
$(t-1)(t-1)!a_\lambda$.  Because the Schur polynomials
$s_\nu(x_1,\,\dots,\,x_t)$ with $\ell\brk1{\nu}\le t$ are linearly independent,
this is also the coefficient of~$s_{\lambda+(1^t)}$ in~$\CSF_{\Bt}$.
\end{proof}

We now apply \cref{lem:transfer} to the negative coefficient in
\cref{eq:base-negative}.  Take
\[
\lambda=(3,\,3,\,3,\,3).
\]
Since $\ell\brk1{\lambda}=4$, the lemma applies for every $t\ge 4$.  Padding with
zeros and adding $(1^t)$ gives
\[
\lambda+(1^t)=(4,\,4,\,4,\,4,\,1^{t-4}).
\]
Consequently,
\begin{equation}\label{eq:family-negative}
\coeff{(4,\,4,\,4,\,4,\,1^{t-4})}{\CSF_{\Ft}}
=(t-1)(t-1)!\,
\coeff{(3,\,3,\,3,\,3)}{\CSF_{\Gtwo}}
=-40(t-1)(t-1)!<0.
\end{equation}

\begin{theorem}\label{thm:infinite-family}
For every integer~$t\ge4$, the graph~$\Ft$ is a connected line graph and
\[
\coeff{(4,\,4,\,4,\,4,\,1^{t-4})}{\CSF_{\Ft}}
=-40(t-1)(t-1)!<0.
\]
Consequently, the graphs~$\Ft$, for~$t\ge4$, form an infinite family of counterexamples to \cref{conj:SG}.
\end{theorem}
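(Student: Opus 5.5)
The theorem is an assembly of results already in place, so the plan is to combine them. For the structural part, I would invoke \cref{prop:line-graph}: for every $t\ge4$, $\Ft\cong L(\Ft')$, where $\Ft'$ is obtained from $H_2$ by attaching a pendant edge $\{2,c\}$ at vertex $2$ and a star centred at $c$ with leaves $u_2,\dots,u_t$. The check behind ``routine'' is that the $t$ edges at $c$ form the clique $K_t$. The edge $\{2,c\}$ plays the role of $w$ and is adjacent, among the edges of $H_2$, only to $v_3=\{2,7\}$, since vertex $2$ has degree one in $H_2$. Connectedness of $\Ft'$ follows from that of $H_2$, and hence $\Ft$ is a connected line graph. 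Claw-freeness then comes from Beineke's characterization.

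For the coefficient, I would apply \cref{lem:transfer} with $G=\Gtwo$, $v=v_3$, and $\lambda=(3,3,3,3)\vdash12$, noting that $\Bt$ is exactly $\Ft$ by \eqref{eq:family-definition}. The hypothesis $\ell(\lambda)=4\le t$ holds precisely because $t\ge4$. Adding a column of height $t$ gives $\lambda+(1^t)=(4,4,4,4,1^{t-4})$. The lemma then yields the value $(t-1)(t-1)!\cdot(-40)$, using \eqref{eq:base-negative}; this is the computation recorded in \eqref{eq:family-negative}. The result is strictly negative since $(t-1)(t-1)!>0$ for $t\ge2$.

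To conclude, note that the graphs $\Ft$ have pairwise distinct orders $12+t$, so they are pairwise non-isomorphic and the family is infinite. Each is claw-free but has a negative Schur coefficient, so each contradicts \cref{conj:SG}.

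The only substantive ingredient is the validity of \cref{lem:transfer}, specifically the passage from $t$ variables back to $\Lambda$. I would verify it using the linear independence of the $s_\nu(x_1,\dots,x_t)$ with $\ell(\nu)\le t$. Every Schur function of $\CSF_{\Bt}$ with more than $t$ rows vanishes under $\rho_t$, but $(4,4,4,4,1^{t-4})$ has exactly $t$ rows, so its coefficient is detected faithfully. This step is already proved in the paper, so the theorem follows immediately.
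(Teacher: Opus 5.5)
Your proposal is correct and follows the paper's proof essentially step for step. You cite \cref{prop:line-graph} for the connected line-graph structure; your explicit check that the $t$ edges at $c$ form $K_t$, with $\{2,c\}$ meeting only $v_3$ among the edges of $H_2$, is accurate. You then apply \cref{lem:transfer} with $G=\Gtwo$, $v=v_3$, $\lambda=(3,3,3,3)$ to obtain $-40(t-1)(t-1)!$, and distinguish the graphs $\Ft$ by their orders $12+t$.
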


\begin{proof}
\Cref{prop:line-graph} shows that each~$\Ft$ is a connected line graph and
hence is claw-free.  The displayed coefficient is negative
by \eqref{eq:family-negative}.  The graphs are pairwise nonisomorphic because
$\lvert V(\Ft)\rvert=12+t$ is strictly increasing with~$t$.
\end{proof}

The lower bound~$t\ge4$ in \Cref{thm:infinite-family} is sharp.  Indeed, the
same construction defines~$F_3$, and the root-graph construction in the
proof of \cref{prop:line-graph} shows that~$F_3$ is a line graph.  A direct
Maple computation using the symmetric-function package of
\citet{Stembridge1995} shows that~$F_3$ is Schur-positive.

\section{An infinite family of non-line-graph counterexamples}
\label{sec:nonline-family}

The family constructed in \cref{sec:line-family} consists of line graphs.
We now construct a claw-free counterexample~$Q$ that is not a line graph,
establish its uniqueness and minimality under both lexicographic orders,
and apply \cref{lem:transfer} to obtain an infinite family.

\subsection{Minimality under both lexicographic orders}
\label{subsec:nonline-minimality}

The $22$-edge counterexamples in
\citet[Theorem~1]{Prajapati2026} and \citet{MM2026} are isomorphic.
Following the notation of \citet{MM2026}, let~$H_1$ be the graph on
vertex set
$\{1,\,\dots,\,10\}$ with edge set
$E(H_1)=\{u_1,\,\dots,\,u_{12}\}$, where
\[
\begin{array}{r@{\,=\,}l r@{\,=\,}l r@{\,=\,}l r@{\,=\,}l
                r@{\,=\,}l r@{\,=\,}l}
u_1 & \{1,\,8\}, & u_2 & \{7,\,10\}, & u_3 & \{2,\,7\}, &
u_4 & \{4,\,5\}, & u_5 & \{4,\,9\}, & u_6 & \{3,\,6\},\\
u_7 & \{6,\,10\}, & u_8 & \{3,\,10\}, & u_9 & \{5,\,9\}, &
u_{10} & \{8,\,10\}, & u_{11} & \{8,\,9\}, &
u_{12} & \{7,\,9\}.
\end{array}
\]
Let~$\Gone=L(H_1)$, where the vertex~$u_i$ of~$\Gone$ corresponds to the
edge~$u_i$ of~$H_1$ under the line-graph construction; see
\cref{fig:G1}.

\begin{figure}[ht]
\centering
\begin{minipage}[b]{0.45\textwidth}
\centering
\begin{tikzpicture}[scale=0.82,line cap=round,line join=round]
  \coordinate (h1) at (-1.5,-1.6);
  \coordinate (h2) at ( 1.7, 1.6);
  \coordinate (h3) at (-1.5, 0.5);
  \coordinate (h4) at ( 0.8,-1.6);
  \coordinate (h5) at ( 1.8,-0.5);
  \coordinate (h6) at (-0.5, 1.6);
  \coordinate (h7) at ( 0.8, 0.5);
  \coordinate (h8) at (-0.5,-0.5);
  \coordinate (h9) at ( 0.8,-0.5);
  \coordinate (h10) at (-0.5, 0.5);

  \draw[graph edge] (h1)--(h8)
        (h2)--(h7)
        (h3)--(h6) (h3)--(h10)
        (h4)--(h5) (h4)--(h9) (h5)--(h9)
        (h6)--(h10) (h7)--(h9) (h7)--(h10) 
        (h8)--(h10) (h8)--(h9);

  \foreach \p in {h1,h2,h3,h4,h5,h6,h7,h8,h9,h10} {\GraphDot{\p}}
  \node[graph label,below left=2.5pt] at (h1) {$1$};
  \node[graph label,above right=2.5pt] at (h2) {$2$};
  \node[graph label,left=2.5pt] at (h3) {$3$};
  \node[graph label,below=2.5pt] at (h4) {$4$};
  \node[graph label,right=2.5pt] at (h5) {$5$};
  \node[graph label,above=2.5pt] at (h6) {$6$};
  \node[graph label,above left=2.5pt] at (h7) {$7$};
  \node[graph label,above left=2.5pt] at (h8) {$8$};
  \node[graph label,above left=2.5pt] at (h9) {$9$};
  \node[graph label,above right=2.5pt] at (h10) {$10$};
\end{tikzpicture}
\captionsetup{font=footnotesize,justification=centering}
\captionof{figure}{The root graph~$H_1$.}
\label{fig:H1}
\end{minipage}\hfill
\begin{minipage}[b]{0.51\textwidth}
\centering
\begin{tikzpicture}[scale=0.90,line cap=round,line join=round]
  \coordinate (u3)  at (-0.65,-1.50);
  \coordinate (u2)  at (-1.35,-0.60);
  \coordinate (u1)  at (-0.65, 1.50);
  \coordinate (u4)  at ( 2.30, 0.00);
  \coordinate (u5)  at ( 1.30, 0.60);
  \coordinate (u6)  at (-3.50, 0.00);
  \coordinate (u7)  at (-2.50, 0.60);
  \coordinate (u8)  at (-2.50,-0.60);
  \coordinate (u9)  at ( 1.30,-0.60);
  \coordinate (u10) at (-1.35, 0.60);
  \coordinate (u11) at ( 0.15, 0.60);
  \coordinate (u12) at ( 0.15,-0.60);

  \draw[graph edge] (u3)--(u2) (u3)--(u12)
        (u2)--(u7) (u2)--(u8) (u2)--(u10) (u2)--(u12) 
        (u1)--(u10) (u1)--(u11) (u10)--(u11)
        (u4)--(u5) (u4)--(u9)
        (u5)--(u9) (u5)--(u11) (u5)--(u12)
        (u6)--(u7) (u6)--(u8)
        (u7)--(u8) (u7)--(u10)
        (u8)--(u10)
        (u9)--(u11) (u9)--(u12)
        (u11)--(u12);

  \foreach \p in {u1,u2,u3,u4,u5,u6,u7,u8,u9,u10,u11,u12} {\GraphDot{\p}}
  \node[graph label,below=2.5pt] at (u3) {$u_3$};
  \node[graph label,below left=2.5pt] at (u2) {$u_2$};
  \node[graph label,above=2.5pt] at (u1) {$u_1$};
  \node[graph label,right=2.5pt] at (u4) {$u_4$};
  \node[graph label,above=2.5pt] at (u5) {$u_5$};
  \node[graph label,left=2.5pt] at (u6) {$u_6$};
  \node[graph label,above=2.5pt] at (u7) {$u_7$};
  \node[graph label,below=2.5pt] at (u8) {$u_8$};
  \node[graph label,below=2.5pt] at (u9) {$u_9$};
  \node[graph label,above left=2.5pt] at (u10) {$u_{10}$};
  \node[graph label,above right=2.5pt] at (u11) {$u_{11}$};
  \node[graph label,below right=2.5pt] at (u12) {$u_{12}$};
\end{tikzpicture}
\captionsetup{font=footnotesize,justification=centering}
\captionof{figure}{The line graph~$\Gone=L(H_1)$.}
\label{fig:G1}
\end{minipage}
\end{figure}

Define~$Q$ by adjoining a new vertex~$u_{13}$ to~$\Gone$
and setting
\begin{equation}\label{eq:nonline-base-definition}
\begin{aligned}
V(Q)&=V(\Gone)\sqcup\{u_{13}\},\\
E(Q)&=E(\Gone)\cup
\{u_{13}u_4,\,u_{13}u_5,\,u_{13}u_9,\,
  u_{13}u_{11},\,u_{13}u_{12}\}.
\end{aligned}
\end{equation}
Thus~$u_5$ and~$u_{13}$ are true twins; explicitly,
\begin{equation}\label{eq:true-twins}
N_Q[u_5]=N_Q[u_{13}]
=\{u_4,\,u_5,\,u_9,\,u_{11},\,u_{12},\,u_{13}\}.
\end{equation}
The graph~$Q$ has $13$ vertices and $27$ edges; see
\cref{fig:nonline-base}.

\begin{figure}[ht]
\centering
\begin{tikzpicture}[scale=0.90,line cap=round,line join=round]
  \coordinate (u3)  at (-0.65,-1.50);
  \coordinate (u2)  at (-1.35,-0.60);
  \coordinate (u1)  at (-0.65, 1.50);
  \coordinate (u4)  at ( 3.30, 0.00);
  \coordinate (u5)  at ( 1.30, 0.60);
  \coordinate (u6)  at (-3.50, 0.00);
  \coordinate (u7)  at (-2.50, 0.60);
  \coordinate (u8)  at (-2.50,-0.60);
  \coordinate (u9)  at ( 1.30,-0.60);
  \coordinate (u10) at (-1.35, 0.60);
  \coordinate (u11) at ( 0.15, 0.60);
  \coordinate (u12) at ( 0.15,-0.60);
  \coordinate (u13) at ( 1.80, 0.00);

  \draw[graph edge] (u3)--(u2) (u3)--(u12)
        (u2)--(u7) (u2)--(u8) (u2)--(u10) (u2)--(u12) 
        (u1)--(u10) (u1)--(u11) (u10)--(u11)
        (u4)--(u5) (u4)--(u9)
        (u5)--(u9) (u5)--(u11) (u5)--(u12)
        (u6)--(u7) (u6)--(u8)
        (u7)--(u8) (u7)--(u10)
        (u8)--(u10)
        (u9)--(u11) (u9)--(u12)
        (u11)--(u12)
        (u13)--(u11) (u13)--(u12) (u13)--(u4) (u13)--(u5)
        (u13)--(u9);

  \foreach \p in {u1,u2,u3,u4,u5,u6,u7,u8,u9,u10,u11,u12,u13} {\GraphDot{\p}}
  \node[graph label,above=2.5pt] at (u1) {$u_1$};
  \node[graph label,below left=2.5pt] at (u2) {$u_2$};
  \node[graph label,below=2.5pt] at (u3) {$u_3$};
  \node[graph label,right=4.5pt] at (u4) {$u_4$};
  \node[graph label,above=2.5pt] at (u5) {$u_5$};
  \node[graph label,left=2.5pt] at (u6) {$u_6$};
  \node[graph label,above=2.5pt] at (u7) {$u_7$};
  \node[graph label,below=2.5pt] at (u8) {$u_8$};
  \node[graph label,below=2.5pt] at (u9) {$u_9$};
  \node[graph label,above left=2.5pt] at (u10) {$u_{10}$};
  \node[graph label,above right=2.5pt] at (u11) {$u_{11}$};
  \node[graph label,below right=2.5pt] at (u12) {$u_{12}$};
  \node[graph label,below right=2.5pt] at (u13) {$u_{13}$};
\end{tikzpicture}
\captionsetup{font=footnotesize,justification=centering}
\caption{The graph~$Q$, obtained from~$\Gone$ by adjoining the true twin
$u_{13}$ of~$u_5$.}
\label{fig:nonline-base}
\end{figure}

\begin{proposition}\label{prop:nonline-base}
The graph~$Q$ is connected and claw-free, but is not a line graph.  Moreover,
\begin{equation}\label{eq:nonline-base-negative}
\coeff{(3,\,3,\,3,\,3,\,1)}{\CSF_Q}=-144.
\end{equation}
\end{proposition}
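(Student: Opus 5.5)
The proposition has four parts: connectivity, claw-freeness, failure to be a line graph, and the coefficient value. I would treat them in turn.

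\emph{Connectivity.} This is immediate. $\Gone=L(H_1)$ is connected because $H_1$ is connected with no isolated vertices. The new vertex $u_{13}$ has five neighbours in $\Gone$.

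\emph{Claw-freeness.} I would argue by contradiction and suppose $Q$ contains an induced claw with centre $c$ and leaves $a,b,d$. Since $\Gone$ is claw-free, the claw must use $u_{13}$.

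\begin{itemize}[leftmargin=*]
\item If $u_{13}$ is a leaf, replace it by its true twin $u_5$. When $c\neq u_5$, the twin relation \cref{eq:true-twins} preserves all adjacencies, so we get an induced claw in $\Gone$, a contradiction.
\item When $c=u_5$, the other leaves lie in $N[u_{13}]$ and are therefore adjacent to $u_{13}$, so no claw arises.
\item If $u_{13}$ is the centre, its leaves lie in $\{u_4,u_5,u_9,u_{11},u_{12}\}$. Any triple containing $u_5$ is not independent, because $u_5$ is adjacent to all of them. We are then reduced to independent triples inside $\{u_4,u_9,u_{11},u_{12}\}$.
\end{itemize}

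The set $\{u_4,u_9,u_{11},u_{12}\}$ is covered by the two cliques $\{u_4,u_9\}$ and $\{u_9,u_{11},u_{12}\}$, coming from the edges at vertices $4$ and $9$ of $H_1$. So it contains no independent triple.

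\emph{Not a line graph.} I would use Beineke's forbidden-subgraph characterization \citep{Beineke1970} and exhibit one of the nine forbidden induced subgraphs. The natural candidate is $K_5-e$, or a small graph built around the twins $u_5,u_{13}$. The neighbourhood $N[u_5]$ contains the triangle $u_9u_{11}u_{12}$ together with $u_4$, $u_5$ and $u_{13}$.

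\begin{itemize}[leftmargin=*]
\item $\{u_5,u_{13},u_9,u_{11},u_{12}\}$ induces $K_5$, and $K_5$ is itself a line graph, so this alone is not enough.
\item Adding $u_4$, which is adjacent to $u_5,u_{13},u_9$ but not to $u_{11},u_{12}$, gives a 6-vertex graph $K_6$ minus the two edges $u_4u_{11},u_4u_{12}$.
\end{itemize}

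I would check this 6-vertex graph against Beineke's list. Failing that, I would argue directly via Krausz clique covers: a line graph needs every edge in exactly one clique of a cover, with every vertex in at most two cliques. The twins $u_5,u_{13}$ would then have to lie in a common clique containing $u_9,u_{11},u_{12}$, while $u_4$ forces a second clique $\{u_4,u_5,u_{13},u_9\}$. Then $u_9$ and $u_5$ share two cliques, which is impossible.

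\emph{The coefficient.} This is the main obstacle, as I see no structural shortcut. I would compute it exactly with the same pipeline as \cref{subsec:exact-calculation}:
\begin{enumerate}[leftmargin=*]
\item obtain the power-sum expansion \eqref{eq:power-sum-census} over the $2^{27}$ edge subsets via the frontier dynamic program;
\item apply \eqref{eq:frobenius-census} with Murnaghan--Nakayama character values for $\lambda=(3,3,3,3,1)$.
\end{enumerate}
I would cross-check the result with Stembridge's Maple package. As an independent sanity check I would consider the deletion of $u_{13}$, but there is no clean transfer formula for twins, so the exact computation is the proof. The result $-144$ establishes \eqref{eq:nonline-base-negative}.
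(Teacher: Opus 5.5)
\textbf{Gap: the non-line-graph step is not completed.} Your connectivity and claw-freeness arguments are correct. The paper compresses your claw case analysis into the remark that adjoining a true twin preserves claw-freeness. Like you, it obtains $-144$ only by exact computation. One harmless slip: the clique $\{u_4,u_9\}$ comes from vertex $5$ of $H_1$ ($u_4=\{4,5\}$, $u_9=\{5,9\}$), not vertex $4$. The edge $u_4u_9$ is present, so nothing changes.

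Your six-vertex graph on $\{u_4,u_5,u_9,u_{11},u_{12},u_{13}\}$ is not one of Beineke's nine graphs. It is not a minimal obstruction, so comparing it with the list finds nothing. The Krausz fallback has an unjustified step. You assert that $u_5,u_{13}$ must share a clique with $u_9,u_{11},u_{12}$, and that $u_4$ forces the clique $\{u_4,u_5,u_{13},u_9\}$. A priori the edges $u_5u_{13}$ or $u_5u_9$ could be covered by the clique containing $u_4$, so as written you have refuted only one configuration.

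The missing fact is that in any Krausz partition every $K_4$ lies inside a single clique. Suppose the three edges from some vertex $a$ of a $K_4$ $\{a,b,c,d\}$ split as $ab,ac\in C_1$ and $ad\in C_2$. Then $bd$ and $cd$ need two further, distinct cliques, which puts $d$ in three cliques. With this lemma your argument closes:
\begin{itemize}
\item the $K_5$ on $\{u_5,u_9,u_{11},u_{12},u_{13}\}$ and the $K_4$ on $\{u_4,u_5,u_9,u_{13}\}$ each lie in one clique;
\item these cliques are distinct because $u_4u_{11}\notin E(Q)$;
\item yet both contain the edge $u_5u_{13}$, a contradiction.
\end{itemize}
A simpler repair is what the paper does. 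Delete $u_{12}$ from your six-vertex set. The remaining $W=\{u_4,u_5,u_9,u_{11},u_{13}\}$ induces $K_5-u_4u_{11}$, which is exactly the Beineke obstruction you named as the natural candidate. The paper then gives a two-line direct proof that $K_5-e$ is not a line graph.
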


\begin{proof}
Since~$\Gone$ is connected and claw-free, adjoining the true twin~$u_{13}$
of~$u_5$ preserves these two properties; hence~$Q$ is connected and
claw-free.  The set
\[
W=\{u_4,\,u_5,\,u_9,\,u_{11},\,u_{13}\}
\]
induces~$K_5-u_4u_{11}$, which is not the line graph of a simple graph:
if its two nonadjacent vertices corresponded to disjoint edges~$ab$ and~$cd$,
the other three would have to correspond to three pairwise intersecting
edges chosen from~$ac,ad,bc,bd$, which is impossible.  Since line
graphs are closed under taking induced subgraphs, $Q$ is not a line graph.
A direct exact computation using the method of
\cref{subsec:exact-calculation} gives
\cref{eq:nonline-base-negative}.
\end{proof}

We next establish the minimality of~$Q$ under both lexicographic orders. We first record the vertex-first census.  By
\citet[Proposition~4]{Prajapati2026}, every connected claw-free graph on
at most~$11$ vertices is Schur-positive, and the only connected
claw-free graphs on~$12$ vertices whose chromatic symmetric functions
are not Schur-positive are the two classes represented by~$\Gtwo$
and~$\Gone$.  Both are line graphs.

Starting from the complete order-$12$ census reported in
\citet{Prajapati2026} and its accompanying data, we generated every
connected claw-free graph on~$13$ vertices with at most~$27$ edges by
adjoining one vertex with every possible nonempty neighborhood and
removing duplicate isomorphism classes by exact canonical labeling.
This is exhaustive because every connected graph has a non-cut vertex
and claw-freeness is hereditary.  The exact Schur computations are
summarized in \cref{tab:nonline-minimality-census}.

\begin{table}[H]
\centering
\small
\begingroup
\setlength{\tabcolsep}{5pt}
\begin{tabular}{@{}crrr@{}}
\toprule
range
& \shortstack[c]{connected claw-free\\graphs}
& counterexamples
& \shortstack[c]{counterexamples that are\\not line graphs}\\
\midrule
$n=12$, $11\le m\le66$
& $1{,}728{,}404$
& $2$
& $0$\\
$n=13$, $12\le m\le26$
& $251{,}199$
& $0$
& $0$\\
$n=13$, $m=27$
& $58{,}577$
& $1$
& $1$\\
\bottomrule
\end{tabular}
\endgroup
\caption{The exact census used for vertex-first, edge-second minimality
of~$Q$.  The entries count isomorphism classes.}
\label{tab:nonline-minimality-census}
\end{table}

For the edge-first order, we use two elementary bounds.  If a connected
claw-free graph has~$n$ vertices and~$m$ edges and is not a line graph,
then
\[
m\ge n+1
\qquad\text{and}\qquad
m\ge7.
\]
Indeed, the cases $m=n-1$ and $m=n$ give, respectively, a claw-free tree
and a claw-free unicyclic graph; these are line graphs.  The second bound
follows from Beineke's characterization, since each non-claw obstruction
has at least seven edges.

The census in \cref{sec:edge-first-minimality} covers every predecessor
with at most~$20$ edges, while
\cref{tab:nonline-minimality-census} covers all relevant graphs with at
most~$13$ vertices and the target stratum.  Hence the remaining connected
strict predecessors of~$Q$ lie exactly in
\begin{equation}\label{eq:nonline-edge-first-range}
14\le n\le25,\qquad
\max\{21,n+1\}\le m\le26.
\end{equation}
For this range, we used \texttt{geng} from nauty~2.9.0 with option
\texttt{-cFq}, removed the line graphs by exact recognition, and computed
the complete Schur expansion of every remaining graph by the method in
\cref{subsec:exact-calculation}.  The order-$13$ strata with
$21\le m\le27$ were also rerun.

\begin{table}[H]
\centering
\small
\begingroup
\setlength{\tabcolsep}{5pt}
\begin{tabular}{@{}crrr@{}}
\toprule
range
& \shortstack[c]{connected claw-free\\classes generated}
& \shortstack[c]{classes that are not\\line graphs}
& counterexamples\\
\midrule
$n=13$, $21\le m\le27$
& $278{,}083$
& $214{,}376$
& $1$\\
\eqref{eq:nonline-edge-first-range}
& $15{,}809{,}909$
& $6{,}505{,}691$
& $0$\\
\midrule
Total
& $16{,}087{,}992$
& $6{,}720{,}067$
& $1$\\
\bottomrule
\end{tabular}
\endgroup
\caption{The exact census used for edge-first, vertex-second minimality
among claw-free graphs that are not line graphs.  The entries count
isomorphism classes.}
\label{tab:nonline-edge-first-census}
\end{table}

The unique counterexample in
\cref{tab:nonline-edge-first-census} occurs at $(n,m)=(13,27)$, and exact
canonical labeling identifies it with~$Q$.  As independent checks, all
$53$ generator scopes were recounted without sharding, and the line-graph
filter and Schur-coefficient program were compared with independent
implementations on fixed verification sets.

\begin{proposition}[Minimum non-line-graph counterexample]
\label{prop:nonline-minimality}
Let~$G$ be a claw-free graph that is not a line graph and is not
Schur-positive.  Then
\[
\lvert V(G)\rvert\ge13
\qquad\text{and}\qquad
\lvert E(G)\rvert\ge27.
\]
The graph~$Q$ is the unique counterexample of this type with $13$
vertices and $27$ edges.  Consequently, $Q$ is the unique minimum under
both lexicographic orders
\[
\bigl(\lvert V(G)\rvert,\,\lvert E(G)\rvert\bigr)
\quad\text{and}\quad
\bigl(\lvert E(G)\rvert,\,\lvert V(G)\rvert\bigr).
\]
\end{proposition}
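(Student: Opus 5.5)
The proof will assemble the computational censuses already reported, after reducing to connected graphs. First, if $G$ is claw-free, not a line graph, and not Schur-positive, then by the component reduction of \cref{subsec:finite-search} some connected component $G_0$ of $G$ is not Schur-positive. Each component is an induced subgraph, so $G_0$ is claw-free. However, $G_0$ need not itself be a non-line graph, since a disconnected non-line graph could have a line-graph component that carries the negativity. To handle this, I will apply \cref{prop:edge-first-minimality} and the vertex census of \citet{Prajapati2026} directly to $G_0$ whenever it is a line graph. Every connected counterexample has at least $12$ vertices and at least $21$ edges, and the only ones at $n=12$ are $\Gone$ and $\Gtwo$. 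If $G_0$ is a line graph, some other component of $G$ must be a non-line graph, hence not a path or cycle. Such a component has at least $7$ edges and at least $5$ vertices, by the Beineke bound recalled before \eqref{eq:nonline-edge-first-range}. This forces $\lvert V(G)\rvert\ge 17$ and $\lvert E(G)\rvert\ge 28$, so $G$ is strictly above $Q$ in both orders and the bounds hold. In the remaining case $G_0$ is itself a connected non-line counterexample, so it suffices to treat connected $G$.

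For connected $G$, I will prove the vertex bound from \cref{tab:nonline-minimality-census}. Orders at most $11$ contain no counterexamples by \citet[Proposition~4]{Prajapati2026}. At order $12$ the only counterexamples are $\Gone$ and $\Gtwo$, which are line graphs. Hence $\lvert V(G)\rvert\ge 13$. The same table shows that at $n=13$ no counterexample exists with $m\le 26$, and exactly one exists with $m=27$.

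For the edge bound, suppose $\lvert E(G)\rvert\le 26$. If $m\le 20$, \cref{prop:edge-first-minimality} gives Schur-positivity. Otherwise $m\ge 21$, and the bound $m\ge n+1$ for connected claw-free non-line graphs gives $n\le 25$. Together with $n\ge 13$, this places $(n,m)$ either in the order-$13$ strata with $21\le m\le 26$, or in the range \eqref{eq:nonline-edge-first-range}. \Cref{tab:nonline-edge-first-census} records no counterexample in either region. Hence $m\ge 27$.

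Uniqueness at $(13,27)$ follows from the single entry in both tables, which canonical labeling identifies with $Q$. \Cref{prop:nonline-base} confirms that $Q$ is indeed of this type. Since every such counterexample has $n\ge 13$ and $m\ge 27$, and $Q$ attains both bounds simultaneously, $Q$ is the unique minimum under both lexicographic orders.

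The main obstacle is the bookkeeping: checking that the union of the search ranges covers every connected strict predecessor in each order. This rests on the two bounds $m\ge n+1$ and $m\ge 7$, in particular the claim that claw-free unicyclic graphs are line graphs. I would verify that claim by observing that such a graph is a cycle with pendant paths attached at most at vertices where claw-freeness permits, and then exhibiting the root graph explicitly.
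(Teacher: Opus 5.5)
Your proposal is correct and follows the paper's proof. Both pass to a non-Schur-positive component, use Prajapati's census with the two tables, and use the bounds $m\ge n+1$ and $m\ge 7$; both also conclude $\lvert E(G)\rvert\ge 21+7=28$ when the bad component is a line graph. The one variation is the disconnected vertex bound, where you split on whether the bad component is a line graph to get $\lvert V(G)\rvert\ge 12+5$, whereas the paper argues that a $12$-vertex bad component plus isolated vertices would make $G$ a line graph; both arguments are valid.
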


\begin{proof}
For connected graphs, \citet[Proposition~4]{Prajapati2026} and
\cref{tab:nonline-minimality-census} give the vertex bound and identify
$Q$ as the unique counterexample in the target stratum.  The census in
\cref{sec:edge-first-minimality}, the two bounds above, and
\cref{tab:nonline-edge-first-census} exclude every connected graph of the
required type with fewer than~$27$ edges.

Now let~$G$ be disconnected, and choose a connected component~$C$ whose
chromatic symmetric function is not Schur-positive.  If
$\lvert V(G)\rvert\le13$, then~$C$ has at most~$12$ vertices and hence is
$\Gtwo$ or~$\Gone$.  It uses~$12$ vertices, so every remaining component
is isolated; thus~$G$ is a line graph, a contradiction.

For the edge bound, if~$C$ is not a line graph, the connected case gives
$\lvert E(C)\rvert\ge27$.  If~$C$ is a line graph, some other component
$D$ is not a line graph.  By \cref{prop:edge-first-minimality},
$\lvert E(C)\rvert\ge21$, while the bound above gives
$\lvert E(D)\rvert\ge7$.  Hence
\[
\lvert E(G)\rvert\ge21+7=28.
\]
The asserted bounds and uniqueness now follow.
\end{proof}

\begin{remark}\label{rem:Q-structure}
The graph~$Q$ is quasi-line.  Indeed, it is obtained from the line
graph~$\Gone$ by replacing the vertex~$u_5$ with the clique
$\{u_5,u_{13}\}$ and giving the two vertices the same neighbors outside
the clique.  Moreover,
\[
\bigl(\{u_5,u_{13}\},\,\{u_4\}\bigr)
\]
is a homogeneous pair of cliques.  Thus~$Q$ is an expansion of a line-graph core in the claw-free structure theory of
\citet{ChudnovskySeymour2005}.
\end{remark}

\subsection{The infinite family}
\label{subsec:nonline-infinite-family}

For every integer~$t\ge5$, take a copy of~$K_t$ disjoint from~$Q$ and choose
a distinguished vertex~$w\in V(K_t)$.  Define
\begin{equation}\label{eq:nonline-family-definition}
\Qt\coloneqq\brk1{Q\sqcup K_t}+u_1w.
\end{equation}
Thus
\[
\lvert V(\Qt)\rvert=13+t
\quad\text{and}\quad
\lvert E(\Qt)\rvert=28+\binom{t}{2}.
\]
See \cref{fig:nonline-construction}.

\begin{figure}[ht]
\centering
\begin{tikzpicture}[scale=0.90,line cap=round,line join=round]
 \coordinate (u3)  at (-0.65,-1.50);
  \coordinate (u2)  at (-1.35,-0.60);
  \coordinate (u1)  at (-0.65, 1.50);
  \coordinate (u4)  at ( 3.30, 0.00);
  \coordinate (u5)  at ( 1.30, 0.60);
  \coordinate (u6)  at (-3.50, 0.00);
  \coordinate (u7)  at (-2.50, 0.60);
  \coordinate (u8)  at (-2.50,-0.60);
  \coordinate (u9)  at ( 1.30,-0.60);
  \coordinate (u10) at (-1.35, 0.60);
  \coordinate (u11) at ( 0.15, 0.60);
  \coordinate (u12) at ( 0.15,-0.60);
  \coordinate (u13) at ( 1.80, 0.00);

  \draw[graph edge] (u3)--(u2) (u3)--(u12)
        (u2)--(u7) (u2)--(u8) (u2)--(u10) (u2)--(u12) 
        (u1)--(u10) (u1)--(u11) (u10)--(u11)
        (u4)--(u5) (u4)--(u9)
        (u5)--(u9) (u5)--(u11) (u5)--(u12)
        (u6)--(u7) (u6)--(u8)
        (u7)--(u8) (u7)--(u10)
        (u8)--(u10)
        (u9)--(u11) (u9)--(u12)
        (u11)--(u12)
        (u13)--(u11) (u13)--(u12) (u13)--(u4) (u13)--(u5)
        (u13)--(u9);

  \foreach \p in {u1,u2,u3,u4,u5,u6,u7,u8,u9,u10,u11,u12,u13} {\GraphDot{\p}}
  \node[graph label,below=2.5pt] at (u3) {$u_3$};
  \node[graph label,below left=2.5pt] at (u2) {$u_2$};
  \node[graph label,above=2.5pt] at (u1) {$u_1$};
  \node[graph label,right=4.5pt] at (u4) {$u_4$};
  \node[graph label,above=2.5pt] at (u5) {$u_5$};
  \node[graph label,left=2.5pt] at (u6) {$u_6$};
  \node[graph label,above=2.5pt] at (u7) {$u_7$};
  \node[graph label,below=2.5pt] at (u8) {$u_8$};
  \node[graph label,below=2.5pt] at (u9) {$u_9$};
  \node[graph label,above left=2.5pt] at (u10) {$u_{10}$};
  \node[graph label,above right=2.5pt] at (u11) {$u_{11}$};
  \node[graph label,below right=2.5pt] at (u12) {$u_{12}$};
  \node[graph label,below right=2.5pt] at (u13) {$u_{13}$};
  
\coordinate (w) at (1.80,1.50);
  \coordinate (c2) at (2.95,2.55);
  \coordinate (c3) at (4.05,2.25);
  \coordinate (c4) at (4.30,1.05);
  \coordinate (c5) at (3.30,0.35);
  \draw[graph edge] (w)--(c2) (w)--(c3) (w)--(c4) (w)--(c5)
        (c2)--(c3) (c2)--(c4) (c2)--(c5)
        (c3)--(c4) (c3)--(c5) (c4)--(c5);

\draw[bridge] (u1)--(w);
  \foreach \p in {w,c2,c3,c4,c5} {\GraphDot{\p}}
  \node[graph label,above=2pt] at (w) {$w$};
  \node[graph label] at (5,1) {$K_5$};
\end{tikzpicture}
\captionsetup{font=footnotesize,justification=centering}
\caption{The construction of~$\Qt$, illustrated for~$t=5$: the graph~$Q$
is joined to a copy of~$K_5$ by the single edge~$u_1w$.}
\label{fig:nonline-construction}
\end{figure}

We apply \cref{lem:transfer} with~$G=Q$, $v=u_1$, and
\[
\lambda=(3,\,3,\,3,\,3,\,1).
\]
Since~$\ell\brk1{\lambda}=5$, the lemma applies for every~$t\ge5$.  Padding
with zeros and adding~$(1^t)$ gives
\[
\lambda+(1^t)=(4,\,4,\,4,\,4,\,2,\,1^{t-5}).
\]
Consequently,
\begin{equation}\label{eq:nonline-family-negative}
\coeff{(4,\,4,\,4,\,4,\,2,\,1^{t-5})}{\CSF_{\Qt}}
=(t-1)(t-1)!\,
\coeff{(3,\,3,\,3,\,3,\,1)}{\CSF_Q}
=-144(t-1)(t-1)!<0.
\end{equation}

\begin{theorem}\label{thm:nonline-family}
For every integer~$t\ge5$, the graph~$\Qt$ is connected and claw-free, but
is not a line graph, and
\[
\coeff{(4,\,4,\,4,\,4,\,2,\,1^{t-5})}{\CSF_{\Qt}}
=-144(t-1)(t-1)!<0.
\]
Consequently, the graphs~$\Qt$, for~$t\ge5$, form an infinite family of
claw-free non-line-graph counterexamples to \cref{conj:SG}.
\end{theorem}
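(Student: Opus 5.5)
The plan has three parts: connectivity and claw-freeness of $\Qt$, the failure to be a line graph, and the coefficient formula. For connectivity, note that $Q$ is connected by \cref{prop:nonline-base} and $K_t$ is connected. The bridge $u_1w$ joins them, so $\Qt$ is connected.

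For claw-freeness, suppose $\Qt$ contains an induced claw with center $x$. Every vertex of $K_t$ other than $w$ has its closed neighborhood inside $V(K_t)$, which is a clique, so it cannot be a center. Next, $N(w)$ is the union of the clique $V(K_t)\setminus\{w\}$ and the single vertex $u_1$. Three pairwise nonadjacent neighbors of $w$ would therefore need two of them in the clique, which is impossible. If the center is a vertex of $Q$ other than $u_1$, the whole claw lies in $Q$, contradicting \cref{prop:nonline-base}. If the center is $u_1$, then at most one leaf, namely $w$, lies outside $Q$. The other two leaves would be nonadjacent neighbors of $u_1$ in $Q$. In $Q$ we have $N(u_1)=\{u_{10},u_{11}\}$ (read off from \cref{fig:nonline-base}), and $u_{10}u_{11}$ is an edge, so no such pair exists. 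This finite check on the neighborhood of $u_1$ is the only place the specific choice of $v=u_1$ matters, and I expect it to be the main point to verify carefully.

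Next, $\Qt$ is not a line graph. The set $W=\{u_4,u_5,u_9,u_{11},u_{13}\}$ from the proof of \cref{prop:nonline-base} still induces $K_5-u_4u_{11}$ in $\Qt$, because $Q$ is an induced subgraph of $\Qt$. That proof showed this graph is not a line graph, and line graphs are closed under induced subgraphs, so $\Qt$ is not a line graph.

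Finally, the coefficient identity follows from \cref{lem:transfer} with $G=Q$, $v=u_1$ and $\lambda=(3,3,3,3,1)$. This is valid since $\ell(\lambda)=5\le t$ and $Q_t=B_t(Q,u_1)$. We have $\lambda+(1^t)=(4,4,4,4,2,1^{t-5})$, and \eqref{eq:nonline-base-negative} gives the value $-144(t-1)(t-1)!$, exactly as in \eqref{eq:nonline-family-negative}; it is negative for $t\ge5$. The graphs $\Qt$ have $13+t$ vertices, so they are pairwise nonisomorphic and form an infinite family of claw-free non-line-graph counterexamples to \cref{conj:SG}.
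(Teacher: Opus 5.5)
Your proposal is correct and matches the paper's proof step for step: connectivity comes from the bridge, claw-freeness is settled by checking the only new candidate centers $u_1$ and $w$ (using $N(u_1)=\{u_{10},u_{11},w\}$ with $u_{10}u_{11}\in E(Q)$, and $N(w)$ being a clique plus $u_1$), the induced subgraph $Q$ rules out being a line graph, and \cref{lem:transfer} with $\lambda=(3,3,3,3,1)$ gives the coefficient. The only cosmetic difference is that you explicitly dismiss the vertices of $K_t$ other than $w$ as claw centers, which the paper leaves implicit.
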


\begin{proof}
The bridge~$u_1w$ makes~$\Qt$ connected.  The only possible new claw centers
are~$u_1$ and~$w$, but
\[
N_{\Qt}(u_1)=\{u_{10},\,u_{11},\,w\},
\qquad
N_{\Qt}(w)=\{u_1\}\cup\bigl(V(K_t)\setminus\{w\}\bigr),
\]
where~$u_{10}u_{11}\in E(Q)$ and~$V(K_t)\setminus\{w\}$ is a clique; hence
$\Qt$ is claw-free.  Since~$Q$ is an induced subgraph of~$\Qt$,
\cref{prop:nonline-base} implies that~$\Qt$ is not a line graph.  The
coefficient is given by \eqref{eq:nonline-family-negative}, and the graphs are
pairwise nonisomorphic because~$\lvert V(\Qt)\rvert=13+t$.
\end{proof}

\end{document}